\newcommand{\R}{\mathbb{R}}
\newcommand{\Hom}{\mathop{\mathrm{Hom}}\nolimits}
\newcommand{\comment}[1]{}
\renewcommand{\section}{\@startsection%
{section}
{1}
{0mm}
{1.5\bigskipamount}
{0.5\bigskipamount}
{\centering\normalsize\sc}}
\renewcommand{\paragraph}{\@startsection%
{paragraph}
{4}
{0mm}
{\bigskipamount}
{-1.25ex}
{\normalsize\sl}}
\def\provedboxcontents#1{$\square$}
\newtheoremstyle{thm}{}{}{\slshape}{}{\scshape}{.}{0.5em}{}
\newtheoremstyle{def}{}{}{}{}{\scshape}{.}{0.5em}{}
\newtheoremstyle{rmk}{}{}{}{}{\scshape}{.}{0.5em}{}
\newtheoremstyle{claim}{}{}{}{}{\slshape}{.}{0.5em}{}
\newtheorem{newstatement}{newstatement}
\newtheorem{theorem}[newstatement]{Theorem}
\newtheorem{proposition}[newstatement]{Proposition}
\newtheorem*{conjecture*}{Conjecture}
\newtheorem*{prop*}{Proposition}
\newtheorem{definition}{Definition}
\theoremstyle{def}
\theoremstyle{rmk}
\newtheorem{remark}[newstatement]{Remark}
\newtheorem{example}[newstatement]{Example}
\theoremstyle{claim}
\let\expandafter\oldproof\csname\string\proof\endcsname
\let\oldendproof\endproof
\renewenvironment{proof}[1][\proofname]{%
  \oldproof[\slshape #1]%
}{\oldendproof}
\let\geq\geqslant
\let\leq\leqslant
\let\phi\varphi
\let\epsilon\varepsilon
\renewcommand{\emph}[1]{{\slshape #1}}
\renewcommand{\em}{\sl}
\title[Robustness of statistical models]{Robustness of statistical models}
\author{Andrea Loi}
\address{Andrea Loi, Dipartimento di Matematica e Informatica \\
         Universit\`a di Cagliari, Italy.}
         \email{loi@unica.it}
\author{Stefano Matta}
\address{Stefano Matta, Dipartimento di Scienze economiche e Aziendali \\
         Universit\`a di Cagliari, Italy.}
         \email{smatta@unica.it}
\date{}
\thanks{The first author was supported by INdAM. GNSAGA - Gruppo Nazionale per le Strutture Algebriche, Geometriche e le loro Applicazioni. Both authors were supported by STAGE - Funded by Fondazione di Sardegna.}
\begin{document}
\begin{abstract}
A  statistical structure $(g, T)$ on a smooth manifold $M$ induced by 
 $(\tilde M, \tilde g, \tilde T)$
is said to be {\em robust} if there exists an open neighborhood of $(g,T)$ in the fine $C^{\infty}$-topology consisting
of  statistical structures induced by  $(\tilde M, \tilde g, \tilde T)$.
Using Nash--Gromov implicit function theorem,
we show robustness of
the generic statistical structure  induced on $M$ by the standard  linear statistical structure on  ${\R}^N$, for $N$ sufficiently large.

{\it{Keywords}}: Statistical manifolds, statistical models, isostatistical maps, free statistical maps, robustness, Nash-Gromov implicit function theorem.

{\it{Subj.Class}}: 53B12, 53C05, 53C42, 58C15. 

\end{abstract}
\maketitle

\vspace{0.3in}

\section{Introduction}\label{introduction}

The concept of statistical manifold \cite{lau}
provides an intrinsic approach and 
a useful abstraction to encompass various concepts and results 
in information geometry.
A statistical manifold is a manifold $M$
endowed with a statistical structure $(g,T)$,
where $g$ is a Riemannian metric and $T$ is a 3-symmetric tensor, 
which generalize the Fisher metric and 
the Amari-Chentsov tensor, respectively \cite{lau}.

Recently, \cite{hong} has positively addressed a 
question raised by \cite{lau} on whether a statistical manifold $(M,g,T)$
is a statistical model, i.e.
a smoothly parametrized family of probability measures on some sample space 
$\Omega$, $\mathcal P(\Omega)$, whose parameters belong to $M$.
The answer has been provided by \cite{hong}
showing the existence of an immersion of any statistical manifold in some $\mathcal P(\Omega)$, which preserves the statistical structure.
More precisely (see \cite{hong,jostbook}),
any statistical manifold admits an isostatistical embedding in 
$P(\Omega)$ endowed with the statistical structure represented by the 
Fisher metric and the Amari-Chentsov tensor.

We recall that an immersion $h:(M,g,T)\to (\tilde M, \tilde g, \tilde T)$
is isostatistical if it preserves the statistical structure,
i.e. $f^*\tilde g=g$ and $f^*\tilde T=T$. The statistical structure $(g, T)$ on
$M$ is then said to be {\em statistically induced} by  $(\tilde M, \tilde g, \tilde T)$. 
Hence it follows from this definition that a
probability density for the structure $(\tilde g, \tilde T)$, 
$p:\Omega\times\tilde M\to \R$, induces a probability density
for $(g,T)$. Observe that, as highlighted by \cite{jostbook},
this immersion, being  metric and tensor preserving,
can be seen as an \lq\lq intrinsic counterpart''
of sufficient statistic.

In this paper we follow this intrinsic approach.
Our aim is to study the robustness property  of
the  class of statistical structures $\{(g,T)\}$ on a manifold $M$,
which are statistically  induced by $(\tilde g, \tilde T)$,
the statistical structure of a manifold $\tilde M$.
We provide the following definition of robustness.

\begin{definition}
A  statistical structure $(g, T)$ on a smooth manifold $M$ induced by 
 $(\tilde M, \tilde g, \tilde T)$
is said to be {\em robust} if there exists an open neighborhood of $(g,T)$ in the fine $C^{\infty}$-topology consisting
of  statistical structures induced by  $(\tilde M, \tilde g, \tilde T)$.
\end{definition}

We think that this investigation is either natural,
since the function space  $\{(g, T)\}$
can be equipped with the 
fine (Whitney) $C^{\infty}$-topology, which coincides with the ordinary $C^{\infty}$-topology if $M$ is compact, either (hopefully) interesting.

The tool used in our analysis, Nash-Gromov implicit function theorem \cite{na,gr,g-r},
highlights a case of special interest, i.e. 
when $(\tilde M, \tilde g, \tilde T)$ 
is the standard linear statistical manifold, namely $\tilde M=\R^N$, 
$\tilde g=g_{can}=\sum_i^Ndx_i^2$ and $\tilde T=T_{can}= \sum_i^Ndx_i^3$. Hence,
in the sequel, by a {\em $N$-induced statistical structure} we will mean a statistical structure on $M$ induced by  $({\R}^N, g_{can}, T_{can})$. 

The main result  of the present paper is the following  theorem,  which shows that,
for $N$-induced statistical structures, robustness generically holds true,
if $N$ is assumed to be  sufficiently large. In other words, 
the space of robust $N$-induced statistical structures
is dense in the space of $N$-induced statistical structures.

\begin{theorem}\label{mainteor} 
Let $M$ be a  smooth  $n$-dimensional manifold
and let $(g_0, T_0)$ be a $N$-induced statistical structure on $M$.
Assume $N\geq \frac{n(n^2+9n+20)}{6}$.
Then $(g_0, T_0)$ can be approximated in the
Whitney $C^{\infty}$-topology by robust
$N$-induced statistical structures.

\end{theorem}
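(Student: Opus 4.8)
The plan is to realize the given $N$-induced statistical structure $(g_0,T_0)$ by an isostatistical immersion $h_0\colon M\to\R^N$ and then to apply the Nash--Gromov implicit function theorem to the nonlinear differential operator
\[
\mathcal D(h)=(h^*g_{can},\,h^*T_{can})
\]
acting on maps $h\colon M\to\R^N$, whose value is a pair consisting of a symmetric $2$-tensor and a symmetric $3$-tensor. The operator $\mathcal D$ is a first-order differential operator that is algebraic in the first jet of $h$; its linearization at $h$ involves the first jet of the variation and the second jet of $h$. The Nash--Gromov theorem (in the form of Gromov's \emph{local} implicit function theorem for underdetermined systems) guarantees that, near any map $h$ at which the linearized operator admits a right inverse continuous in the appropriate fine topologies — i.e. $h$ is \emph{infinitesimally invertible}, which here is the condition that $h$ be \emph{free} in Gromov's sense — the image of $\mathcal D$ contains a full $C^\infty$-neighborhood of $\mathcal D(h)$. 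Consequently, if $h_0$ can be perturbed to a free isostatistical immersion $h$ realizing a structure $C^\infty$-close to $(g_0,T_0)$, then every structure in a neighborhood of $(h^*g_{can},h^*T_{can})$ is again $N$-induced, so that structure is robust; letting the perturbation shrink shows $(g_0,T_0)$ is a $C^\infty$-limit of robust $N$-induced structures.

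The key steps, in order, are as follows. First, record that by hypothesis there is an isostatistical immersion $h_0\colon M\to\R^N$ with $\mathcal D(h_0)=(g_0,T_0)$. Second, identify the correct notion of freeness for the pair operator $\mathcal D$: a map $h$ is free here if the first and second partial derivatives of $h$, together with the third partials \emph{symmetrized against} the data needed to control the $T$-component, span enough of $\R^N$ pointwise so that the linearized system $\mathcal D'(h)\xi=(k,S)$ is pointwise solvable for $\xi$ with a bound; counting the fibre dimensions, this needs $N$ at least the number of components of $(k,S)$ plus the derivatives consumed, which is exactly where the bound $N\ge \tfrac{n(n^2+9n+20)}{6}$ comes from ($\binom{n+1}{2}$ for $g$, $\binom{n+2}{3}$ for $T$, and $n$ plus $\binom{n+1}{2}$ for first and second derivatives of $\xi$, summing to $\tfrac{n(n^2+9n+20)}{6}$). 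Third, show that free maps are $C^\infty$-dense among maps $M\to\R^N$ in this dimension range: this is a jet-transversality / general-position argument, perturbing $h_0$ slightly so that the relevant jet map avoids the (positive-codimension, by the dimension count) bad set where freeness fails, and doing so with arbitrarily small $C^\infty$ change — one must be slightly careful over a noncompact $M$, using the fine topology and an exhaustion. Fourth, having a free immersion $h$ arbitrarily $C^\infty$-close to $h_0$, hence $\mathcal D(h)$ arbitrarily close to $(g_0,T_0)$, invoke Nash--Gromov to get that a fine $C^\infty$-neighborhood of $\mathcal D(h)$ consists of $N$-induced structures. Fifth, conclude: $\mathcal D(h)$ is robust, and these robust structures approximate $(g_0,T_0)$.

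The main obstacle I expect is the second and third steps taken together: correctly formulating freeness for the combined metric-plus-cubic-tensor operator and verifying that the Nash--Gromov hypotheses (infinitesimal invertibility with tame/continuous right inverse in the fine $C^\infty$-topology) genuinely hold for $\mathcal D$ rather than for the metric operator alone. The $T$-component makes $\mathcal D$ genuinely cubic in the first jet of $h$, so its linearization is more involved than in the classical isometric-immersion setting, and one must check that the algebraic system for $\xi$ decouples into an $h$-linear-algebra problem whose solvability is governed by a single open dense ``free'' condition; the dimension bookkeeping that produces the stated $N$ has to match this exactly. A secondary technical point is the noncompact case: ensuring the perturbation to a free map, and the resulting neighborhood from Nash--Gromov, are taken in the \emph{fine} (Whitney) $C^\infty$-topology, which requires patching local solutions with a locally finite cover and controlling the process near infinity; this is standard in the Nash--Gromov framework but needs to be stated. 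Everything else — translating ``$\mathcal D(h)$ has an open $N$-induced neighborhood'' into ``$\mathcal D(h)$ is robust'' and then into the approximation statement — is formal once the density of free isostatistical immersions near $h_0$ is in hand.
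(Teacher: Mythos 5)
Your proposal follows essentially the same route as the paper: realize $(g_0,T_0)$ by a map $f_0\colon M\to\R^N$, perturb $f_0$ to a ``free statistical'' map $f_1$ using a jet-transversality argument, show that freeness makes the linearized operator $L_{can}$ pointwise invertible (so $\mathcal D_{can}$ is infinitesimally invertible at $f_1$), and then apply the Nash--Gromov implicit function theorem to conclude that a fine $C^\infty$-neighborhood of $\mathcal D_{can}(f_1)$ consists entirely of $N$-induced structures, which gives robustness of $\mathcal D_{can}(f_1)$ and hence density of robust structures. The paper does exactly this, with the freeness condition given explicitly as linear independence of the $m_n$ vectors $f_i$, $f_{ij}$, $f_j\odot f_k$, $f_i\odot f_{jk}+f_j\odot f_{ik}+f_k\odot f_{ij}$, and the infinitesimal inverse constructed by minimal-norm solution of the resulting pointwise linear system.

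One arithmetic slip worth flagging: the tally you give, $\binom{n+1}{2}+\binom{n+2}{3}+n+\binom{n+1}{2}$, equals $n+2\cdot\frac{n(n+1)}{2}+\binom{n+2}{3}=\frac{n(n^2+9n+14)}{6}$, which is the number $m_n$ of \emph{equations} in the linearized system (equivalently the number of vectors that must be independent for freeness), \emph{not} the stated bound $\frac{n(n^2+9n+20)}{6}$. The missing $+n$ does not come from the pointwise linear algebra but from Thom transversality: the non-free locus $\Sigma$ in the $2$-jet space has codimension $N-m_n+1$, and for the generic $2$-jet section $J^2_f\colon M\to J^2(M,\R^N)$ to miss $\Sigma$ one needs that codimension to exceed $\dim M=n$, i.e.\ $N-m_n+1\geq n+1$, i.e.\ $N\geq m_n+n=\frac{n(n^2+9n+20)}{6}$. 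Your sketch does mention the transversality step, so the ingredient is present, but the bound should be attributed to $m_n+n$ rather than to the count of the linear system alone.
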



The reader may notice that our result
can be interpreted as a (local) variant
of the celebrated Nash's isometric immersion theorem
\cite{na}, which says that 
every $n$-dimensional Riemannian manifold $M^n$ can be isometrically
embedded in some ${\R}^{N(n)}$ 
endowed with the flat metric.
Indeed, the above statement is weaker 
than Nash's because there is an 
obstruction stemming from the invariance
of the norm of the $3$-symmetric tensor
which prevents a straightforward generalization of Nash's theorem.
For example, $(S(2)^n_+,g_+,T_+)$, the n-dimensional positive upper 
sphere of radius $2$
endowed with the metric $g_{can_{|S(2)^n_+}}$ and the tensor 
$\sum_{i=1}^n\frac{dx_i^3}{x_i}_{|S(2)^n_+}$,
which identifies the space of all positive probability measures
on a sample space of $n+1$ elementary events
endowed with the Fisher metric and the Amari-Chentsov tensor,
does not admit any isostatistical immersion on $(\R^N,g_{can},T_{can})$,
even if $T_{can}$ is multiplied by a positive constant
(the reader is referred to \cite[Sec 4.5.2]{jostbook} for 
 obstructions for the existence of an isostatistical immersion between 
 statistical manifolds).
The fact that $S(2)^n_+$ is not compact plays a crucial role.
In fact, \cite{hong} proves that any $n$-dimensional  {\em compact} statistical manifold 
$(M,  g, T)$, can be isostatistically embedded
into $({\R}^N, g_{can}, aT_{can})$, for a suitable $a>0$ and  a sufficiently large $N$.

In this paper we are not assuming any topological assumption on $M$ and, moreover, we are dealing with the standard $3$-symmetric tensor $T_{can}$ and not with its multiples.

The proof of Theorem \ref{mainteor} is based on Nash's implicit
function theorem for {\em infinitesimally invertible}
differential operators. Roughly speaking,  the idea of the proof of Theorem \ref{mainteor} is as follows. Since $(g_0, T_0)$ is $N$-induced, then there exists a smooth immersion 
$f_0:M\rightarrow {\R}^N$ such that $f_0^*g_{can}=g_0$ and $f_0^*T_{can}=T_0$.
The strategy is to show that the linearization $L_{can}$
of the smooth operator
${\mathcal D}_{can}$, which assigns   
to each smooth immersion $f:M\rightarrow {\R}^N$
the induced statistical structure
$(g, T)=(f^*g_{can}, f^*T_{can})$ on $M$,
can be infinitesimally inverted.

This paper is organized as follows.
In Section \ref{sectlinear}
we derive the linearization formula for the differential
operator ${\mathcal D}_{can}$ which corresponds
to the statistical structures 
under study. In Section \ref{proofmainteor},
after introducing and  discussing  the notion of free statistical  maps, which is relevant
to define the class of maps where Gromov's tecnique is applicable, namely where the linearization is invertible, we prove Theorem \ref{mainteor}.

\section{The operator ${\mathcal D}_{can}$ and its linearization $L_{can}$}\label{sectlinear}


Our study of statistically inducing  maps
follows the same approach and 
uses the same terminology as in \cite{gr}, where the reader is referred 
to for a general discussion on induced geometric structures
developed in the context of Nash's
immersion theory.
The key tool is a Nash-type implicit function
theorem proved by Gromov for a  special class
of differential operators
(see Section $2.3.1$ in \cite{gr}
for its various formulations and refinements).
A general criterion for the validity of the Nash-Gromov implicit function
theorem
is the {\em infinitesimal 
invertibility} of the relevant differential operator
and, in fact, we will 
work it out explicitly
for the inducing  differential operator ${\mathcal D}_{can}$,
namely  the operator
which assigns   
to each smooth immersion
the induced statistical structure
$(g, T)=(f^*g_{can}, f^*T_{can})$ on $M$ for the fixed
pair $(g_{can} ,T_{can})$ on ${\R}^N$. 
More precisely, the operator 
${\mathcal D}_{can}:\{f\}\rightarrow \{(g, T)\}$
is a differential operator between the space of  smooth immersions $M\rightarrow {\R}^N$
and the space of statistical structures on $M$ (both spaces equipped with the fine $C^{\infty}$-topology).

Observe that
Riemannian metrics $g$ (resp. symmetric $3$-tensors $T$) on 
$M$ are viewed as  smooth sections
$g:M\rightarrow S^2(M)$ (resp. $T:M\rightarrow S^3(M)$)
where $S^2(M)$ (resp.  $S^3(M)$) denotes  the symmetric
square (resp. the symmetric cube) of the cotangent bundle of $M$.
This allows us to interpret our pair of structures
(metric, $3$-tensor)
$=(g, T)$
as sections  
$M\rightarrow S^2(M)\oplus S^3(M)$.

\vspace{0.3cm}
\noindent
{\bf The linearization of the operator ${\mathcal D}_{can}$}

\noindent
Here we construct the linearization
of the operator ${\mathcal D}_{can}$. 
In easy terms, this linearization, denoted by $L_{can}$,
is the differential
of ${\mathcal D}_{can}$ at $f\in \{f\}$
and so it is a linear operator from the tangent space
to the space $\{f\}$
of smooth immersions $M\rightarrow {\R}^N$, say $T_{f}\{f\}$, 
to $T_{(g,T)}\{(g, T)\}$.
 Observe that, due to the above splitting
$S^2(M)\oplus S^3(M)$,
one can decompose the operator 
${\mathcal D}_{can}$ 
into the sum of two operators,
$${\mathcal D}_{can}={\mathcal D}_{g_{can}}\oplus {\mathcal D}_{T_{can}}:\{f\}
\rightarrow \{(g, T)\},$$
where, for a given smooth immersion
$f:M \rightarrow {\R}^N$,
$${\mathcal D}_{g_{can}}(f):=
f^*g_{can}=g$$
and
$${\mathcal D}_{T_{can}}(f):=f^*T_{can}=T.$$

We start by analyzing the linearization of 
${\mathcal D}_{g_{can}}$ and ${\mathcal D}_{T_{can}}$. Although, by the previous decomposition, we can analyze 
the linearization of these two components separately, in the following,
for the resolution of the system (\ref{system1})+(\ref{system2}), we should consider them jointly as they depend on the same argument $f$.

\vspace{0.3cm}
\noindent
{\bf The linearization of the operator ${\mathcal D}_{g_{can}}$}

\noindent
Our first operator ${\mathcal D}_{g_{can}}$
in a neighborhood
of $x\in M$ equipped with local coordinates
$x_1,\dots, x_n$, can be expressed by
$${\mathcal D}_{g_{can}}(f)=\{g_{ij}=g_{can}(f_i, f_j )\},
i,\ j =1,\dots, n,$$
where $f_i=df(\frac{\partial}{\partial x_i})$, 
$i=1,\dots ,n$, denote the images of the vector fields
$\frac{\partial}{\partial x_i}$ on $M$
under the differential of $f$ and where 
$g_{ij}$ are the components of the metric $g=f^*g_{can}$
in our local coordinates.

The linearization of the operator ${\mathcal D}_{g_{can}}$
at $f$
is the linear operator
$$L_{g_{can}}:C^{\infty}(M, {\R}^N)\rightarrow S^2(M),$$
assigning to 
each vector field $y$ on $\R^N$ along
$f(M)$ a quadratic form $g$
on $M$.
We take a smooth $1$-parametric
family of smooth maps $f_t:M\rightarrow {\R}^N$,
$t\in [0, 1]$, such that $f_0=f$
and 
$\frac{df_t}{dt}_{|_{t=0}}=y$
for a given 
$y: M\rightarrow {\R}^N$ and
set  $y_i=\frac{\partial y}{\partial x_i}$,
$i=1, \dots n$.
Then (compare either  \cite[2.3.1]{gr} or \cite{na})
the expression for 
$L_{g_{can}}(y)=\frac{d}{dt}{\mathcal D}_{g_{can}}(f_t)_{t=0}$
in local coordinates $x_1,\dots ,x_n$
is as follows:
\begin{equation}\label{Lh}
y\mapsto
g_{can}(f_i, y_j)+
g_{can}(f_j, y_i),\ i, j=1, \dots ,n.
\end{equation}

\vspace{0.3cm}
\noindent
{\bf The linearization of the operator ${\mathcal D}_{T_{can}}$}

\noindent
The second operator
${\mathcal D}_{T_{can}}$
reads, in local coordinates, $x_1,\dots ,x_n$,  as 
$${\mathcal D}_{T_{can}}(f)=\{T_{ijk}=T_{can}(f_i, f_j, f_k  )\},
i,j, k=1,\dots, n,$$
where 
$T_{ijk}$ are the components of the $3$-symmetric tensor $T=f^*T_{can}$
in our local coordinates.
The linearization of the operator ${\mathcal D}_{T_{can}}$
at $f$
is the linear operator
$$L_{T_{can}}: C^{\infty}(M, {\R}^N)\rightarrow S^3(M),$$
As before we take a smooth $1$-parametric
family of maps $f_t:M\rightarrow {\R}^N$,
$t\in [0, 1]$ such that $f_0=f$
and 
$\frac{df_t}{dt}_{|_{t=0}}=y$
for a given 
$y: M\rightarrow {\R}^N$. 
Then (cf. \cite[3.1.4]{gr})
$L_{T_{can}}(y)=\frac{d}{dt}{\mathcal D}_{T_{can}}(f_t)_{t=0}$
is given by:
\begin{equation}\label{Lh2}
y\mapsto
T_{can}(y_i, f_j, f_k)+
T_{can}(f_i, y_j, f_k)+T_{can}(f_i, f_j, y_k) ,\ i, j,k=1, \dots ,n.
\end{equation}

\vspace{0.3cm}
\noindent
{\bf The inversion of the operator $L_{can}=(L_{g_{can}}, L_{T_{can}})$}

\noindent
To  (locally) invert 
the operator ${\mathcal D}_{can}$,
we invert
its linearization $L_{can}=(L_{g_{can}},L_{T_{can}})$.
This amounts to solving the equation 
\begin{equation}\label{LH}
L_{can}(y)=
(L_{g_{can}}(y), L_{T_{can}}(y))=
(g', T')
\end{equation}
where the right-hand side
$(g', T')$
consists of an arbitrary 
quadratic $2$-tensor $g'$ on $M$
and an arbitrary $3$-tensor  $T'$ on $M$, respectively.
In view of
(\ref{Lh}) and (\ref{Lh2}),
we express (\ref{LH})
by the following system
of p.d.e. in the unknowns
$y$:
\begin{equation}\label{system1}
g_{can}(f_i, y_j)+
g_{can}(f_j, y_i)=g_{ij}'
\end{equation}
\begin{equation}\label{system2}
T_{can}(y_i, f_j, f_k)+
T_{can}(f_i, y_j, f_k)+T_{can}(f_i, f_j, y_k)=T'_{ijk},
\end{equation}
where 
$g_{ij}'$ 
and $T'_{ijk}$,
$i,j, k=1,\dots n$,
are smooth
functions on $M$ 
representing,
in the local coordinates $x_i$, the components of 
$g'$ and $T'$, respectively.

Next, 
we impose 
two additional conditions 
for the field
$y$ (see \cite{da1} and \cite{na}),
namely 
\begin{equation}\label{h0}
g_{can}(f_i, y)=0, \ i=1, \dots, n,
\end{equation}
and 
\begin{equation}\label{h1}
T_{can}(f_j, f_k, y)=0, \ j, k=1, \dots, n,
\end{equation}
Now, we differentiate 
(\ref{h0}) 
and alternate the index $i$
and $j$. Hence
the system 
(\ref{system1}) together with the extra-condition
(\ref{h0})
becomes equivalent to:
\begin{equation}\label{Lhsimpl}
g_{can}(f_{ij}, y)
=-\frac{1}{2}g_{ij}',\  g_{can}(f_i, y)=0,  \ i, j=1, \dots , n
\end{equation}
where $f_{ij}=\partial_i\partial_jf$.
On the other hand, if we differentiate 
(\ref{h1}), we get
$$T_{can}(f_j, f_k,y_i)=
-T_{can}(f_{ij}, f_k, y)-T_{can}(f_j, f_{ik}, y), \ i, j, k=1, \dots , n.$$
Therefore the system (\ref{system2}) 
with the conditions
(\ref{h1}) is equivalent to 
$$T_{can}(f_i, f_{jk}, y)+T_{can}(f_j, f_{ik}, y)+T_{can}(f_k, f_{ij}, y)=$$
\begin{equation}\label{Lhsimpl2}
=  -\frac{1}{2}T'_{ijk},\ T_{can}(f_j, f_k, y)=0,  \ i, j, k=1, \dots , n
\end{equation}

Notice now that since $T_{can}=\sum_{i=1}^ndx_i^3$ and $g_{can}=\sum_{i=1}^ndx_i^2$,
one gets
\begin{equation}\label{eqlinkTg}
T_{can}(u, v, w)=g_{can}(u\odot v, w), \ \forall u, v, w\in {\R}^N,
\end{equation}
where, for for $u=(u_1, \dots, u_N)$ and  $v=(v_1, \dots, v_N)$, 
\begin{equation}\label{dot}
u\odot v:=(u_1v_1, \dots , u_Nv_N). 
\end{equation}

Therefore, by (\ref{Lhsimpl}) and (\ref{Lhsimpl2}), 
the issue of infinitesimally inverting  the operator
${\mathcal D}_{can}$ is 
reduced to find
the solution $y$
of the following  system (\ref{systfinal1})+(\ref{systfinal2}):
\begin{equation}\label{systfinal1}
g_{can}(f_{ij}, y)
=\hat{g}_{ij},\  g_{can}(f_i, y)=0,\  i\leq j,
\end{equation}
\begin{equation}\label{systfinal2}
g_{can}(f_i\odot f_{jk}+f_j\odot f_{ik}+f_k\odot f_{ij}, y)
=\hat{T}_{ijk},\ g_{can}(f_j\odot f_k, y)=0, \ \  i\leq j\leq k,
\end{equation}

in the unknown field $y$,
where $\hat{g}_{ij}:M\rightarrow\R$
and $\hat{T}_{ijk}:M\rightarrow \R$
are smooth functions.
For each $x\in M$, this system is an  {\em algebraic} system consisting of
\begin{equation}\label{mn}
m_n:=n+2s_n+\binom{n+2}{3}= {\frac{n(n^2+9n+14)}{6}}
\end{equation}\label{vsn}
equations, where 
$s_n:=\frac{n(n+1)}{2}$. 
 Notice that every solution of the system 
(\ref{systfinal1})+(\ref{systfinal2}) also  gives  a solution
of the original linearized system
(\ref{system1})+(\ref{system2}) with the extra conditions
(\ref{h0}) and (\ref{h1}).

\section{Free statistical maps and the proof of Theorem \ref{mainteor}}
\label{proofmainteor}

The previous discussion enables us to see how the   linearization of the operator $L_{can}$, 
expressed by the system (\ref{systfinal1})+(\ref{systfinal2})
(and the consequent infinitesimal invertibility of the differential
operator ${\mathcal D}_{can}$) can be used for obtaining our desired result 
(Theorem \ref{mainteor}). The key step 
 is to show  that the operator
${\mathcal D}_{can}$, which associates to each immersion
$f:M\rightarrow {\R}^N$ the induced statistical structure $(f^*g_{can}, f^*T_{can})$,
is an open map on a dense subset in the space of maps.
We call these maps,
which satisfy a certain regularity condition,
{\em free statistical maps} (see Definition \ref{free} below).
Our proof follows the line of reasoning of Theorem 0.4.A 
in \cite{da2} and Theorem 1.1  in \cite{daloi}.
In fact, both papers follow the same pattern
of the case of Riemannian isometric  
immersions (see \cite{na} and  also \cite{gr}), where the relevant 
regularity condition is {\em freedom} of the involved
map $f:M\rightarrow {\R}^N$, i.e. linear independence of the 
$n+\frac{n(n+1)}{2}$ vectors of the first and second partial
derivatives of $f$ (see Remarks \ref{freeN} and \ref{freeG} below).

\begin{definition}[Free statistical maps]\label{free}
\noindent Let $f:M\rightarrow {\R}^N$
be a smooth map and  fix 
local coordinates 
$x_1,\dots , x_n$ around  a point $x\in M$
and denote by $f_i$ and 
$f_{ij}$, $i,j=1, \dots, n$
the first and second derivatives of the map $f$ with respect to these coordinates.
The map  $f:M\rightarrow {\R}^N$
is called a {\em free statistical map}
if, for all $x\in M$,
the $m_n$ (see \eqref{mn}) vectors
\begin{equation}\label{vecfree}
\{f_i(x), \ f_{ij}(x),f_j(x)\odot f_k(x),f_i(x)\odot f_{jk}(x)+ f_j(x)\odot f_{ik}(x)+f_k(x)\odot f_{ij}(x)\}
\end{equation}
are linear independent, for every $x\in M$ and and for all $i\leq j\leq k$.

\end{definition}

\begin{remark}
It is not hard to see that Definition \ref{free}
does not depend on the choice of local coordinates.
\end{remark}

\begin{remark}\label{freeN}
Let $f:M\rightarrow \R^N$ be a smooth map. 
Denote by 
$T^1_f(x)\subset T^2_f(x)
\subset {\R}^N$
the first and second 
osculating space
respectively of the map $f$
at the given point $x\in M$.
Namely, 
$T^1_f(x)=df_{x}(T_{x}M)$
and $T^2_f(x)\subset {\R}^N$
is the subspace spanned
by $f_i(x)$ and 
$f_{ij}(x)$, $i,j=1, \dots, n$, at $x$.
Then the dimension of $T^2_f(x)$
can vary between 
$0$ and $\min (N, n+s_n)$,
for $s_n=\frac{n(n+1)}{2}$
and the map $f$ is \em{free} in the sense of Nash
if $\dim T^1_f(n)=n=\dim M$, 
$\dim T^2_f(n)=\frac{n(n+3)}{2}=n+s_n$ or, equivalently, 
the $n+s_n$ vectors
$\{f_i(x), \ f_{ij}(x)\}$
are linear independent, for every $x\in M$ and for all $i\leq j$.
\end{remark}

\begin{remark}\label{freeG}
In Gromov's terminology (see \cite[3.1.4]{gr}),  a smooth map $f:M\rightarrow \R^N$ is called $T_{can}$-free if the $s_n+\binom{n+2}{3}$ vectors
$$\{f_i(x)\odot f_{jk}(x)+ f_j(x)\odot f_{ik}(x)+f_k(x)\odot f_{ij}(x),f_j(x)\odot f_k(x)\}$$
are linear independent, for every $x\in M.$ and for all $i\leq j\leq k$. Hence our definition of free statistical map 
extends both Nash's freedom and  Gromov's $T_{can}$-freedom conditions.
\end{remark}

\begin{example}
When $M=\R^n$ it is not hard to see
that the map $f:\R^n\to \R^{m_n}$ given by 
$$\left(x_1, 2x_1, \dots , x_n, 2x_n,  \{x_jx_k\}_{j\leq k}, \{x_j+x_k\}_{j<k},\{x_p+x_q^2\}_{p, q=1, \dots n},
\{ x_a+x_bx_c\}_{a<b<c}\right)$$
(where the strings are ordered in lexicographic order) is a  free statistical map.
\end{example}

In the following proposition we prove that the operator 
$$(d{\mathcal D}_{can})_f=L_{can}: T_f\{f\}\rightarrow \{(g, T)\}$$
is invertible if $f$ is free statistical.

\begin{proposition}\label{critinv}
Let $f:M\rightarrow {\R}^N$
be a free statistical map.
Then the linear operator
$L_{can}$ is invertible over all of $M$
by some differential operator
$M_{f}$, i.e.
$L_{can}\circ M_{f}=id$.
\end{proposition}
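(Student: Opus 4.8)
The plan is to invert the linear operator $L_{can}$ pointwise first, and then assemble the pointwise inverses into a genuine differential operator $M_f$ using the freeness hypothesis to guarantee uniform regularity. Recall from Section~\ref{sectlinear} that solving $L_{can}(y)=(g',T')$ is reduced, via the extra conditions \eqref{h0} and \eqref{h1}, to solving the \emph{algebraic} system \eqref{systfinal1}+\eqref{systfinal2} in the unknown vector field $y:M\to\R^N$. For a fixed $x\in M$, this is a linear system of $m_n$ equations in the $N$ unknown components of $y(x)$, and the coefficient vectors appearing on the left-hand side are precisely the $m_n$ vectors listed in \eqref{vecfree}. Since $f$ is a free statistical map, these vectors are linearly independent at every $x\in M$; hence the $m_n\times N$ coefficient matrix has full row rank $m_n$, and the algebraic system is solvable for $y(x)$ for \emph{every} right-hand side $(\hat g_{ij},\hat T_{ijk})$.

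Next I would make this solution canonical and smooth. The standard device (as in Nash, and followed in \cite{gr,da2,daloi}) is to take the \emph{least-squares} solution: write $y(x)$ as a linear combination $y(x)=\sum_\alpha \lambda_\alpha(x)\,v_\alpha(x)$ of the coefficient vectors $v_\alpha(x)$ themselves (so that the orthogonality conditions \eqref{h0} and \eqref{h1} are automatically respected in the complementary directions), substitute into \eqref{systfinal1}+\eqref{systfinal2}, and obtain an $m_n\times m_n$ Gram-type linear system $G(x)\lambda(x)=(\hat g,\hat T)(x)$, where $G(x)=\bigl(\langle v_\alpha(x),v_\beta(x)\rangle\bigr)$ is the Gram matrix of the vectors in \eqref{vecfree}. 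By freeness this Gram matrix is invertible at every $x$, with $G^{-1}$ depending smoothly on $x$ because the entries of $G$ are polynomial expressions in the partial derivatives of $f$ up to order two (hence $G^{-1}$ is smooth wherever $\det G\neq 0$, which is everywhere). Therefore $\lambda(x)=G(x)^{-1}(\hat g,\hat T)(x)$, and $y$ is recovered by a formula that is algebraic in $\lambda$ and in the first and second derivatives of $f$. Tracing back through the reductions, $\hat g_{ij}$ and $\hat T_{ijk}$ are, up to a factor $-\tfrac12$, the components $g'_{ij}$, $T'_{ijk}$ of the data $(g',T')$; so altogether $y$ is obtained from $(g',T')$ by applying a linear differential operator of order zero in $(g',T')$ whose coefficients are smooth functions built from derivatives of $f$ of order $\le 2$. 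This operator is the desired $M_f$, and by construction $L_{can}\circ M_f=\id$.

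Finally I would check two points of rigor. First, one must confirm that the passage from \eqref{system1}+\eqref{system2} to \eqref{systfinal1}+\eqref{systfinal2} is reversible in the sense needed: as already noted after \eqref{systfinal2}, any solution $y$ of the reduced algebraic system together with \eqref{h0}, \eqref{h1} yields a solution of the original linearized equations, so $M_f$ as constructed genuinely right-inverts $L_{can}$. Second, one should note that everything is local-coordinate independent: freeness of $f$ does not depend on the chart (by the Remark following Definition~\ref{free}), the Gram matrix construction is invariant, and the resulting $M_f$ patches together to a globally defined differential operator over all of $M$. The main obstacle — and the only place the hypothesis $f$ \emph{free statistical} is essential — is the invertibility of the Gram matrix $G(x)$ uniformly in $x$; this is exactly the linear-independence statement built into Definition~\ref{free}, which is why the freeness condition was tailored to package precisely the $m_n$ vectors in \eqref{vecfree}. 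Once that is in hand, the construction of $M_f$ and the verification $L_{can}\circ M_f=\id$ are routine linear algebra with smooth dependence on $x$.
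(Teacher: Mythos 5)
Your proof is correct and takes essentially the same route as the paper: the paper invokes the minimal-norm section of the affine solution bundle with respect to an auxiliary metric (e.g.\ $g_{can}$), and your Gram-matrix/least-squares construction is precisely the explicit realization of that minimal-norm solution, with the invertibility of the Gram matrix being equivalent to the linear independence in Definition~\ref{free}. Your version simply makes the smooth dependence on $x$ and the zeroth-order character of $M_f$ in $(g',T')$ more explicit than the paper does.
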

\begin{proof}
It follows from Section \ref{sectlinear}
that we need to find a solution
$y$ of the system 
of equations (\ref{systfinal1})+(\ref{systfinal2}).
Since the map $f:M\rightarrow {\R}^N$
is free statistical,
it follows  that the solution of
 (\ref{systfinal1})+(\ref{systfinal2})
forms an affine bundle
over $M$ of rank    $N- m_n$.
Now, every affine bundle
admits a section over $M$. To choose
it in a canonical way, one may use any 
fixed auxiliary Riemannian metric 
on ${\R}^N$ (e.g., we can use $g_{can}$)
and then take as canonical solution,
say $y_0$, the solution
$y$ of (\ref{systfinal1})+(\ref{systfinal2})
which has the minimal norm with respect
to this metric at every point
$f(x)\in {\R}^N$
(see, e.g., \cite{gr}, \cite{g-r}, \cite{na}).
Finally, we define the 
inverse $M_{f}$ of $L_{can}$
by 
$M_{f}(g', T')=y_0.$
\end{proof}

To make sure that the results we get 
are non-empty, we show the following:

\begin{proposition}\label{mainprop}
For $N\geq \frac{n(n^2+9n+20)}{6}$,
generic maps
$f:M\rightarrow {\R}^N$ are
free statistical.
\end{proposition}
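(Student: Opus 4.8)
The plan is to show that ``free statistical'' is a generic condition by a standard jet-transversality argument, together with a linear-algebra dimension count verifying that the ambient dimension $N$ in the hypothesis is large enough. First I would reformulate the condition: for a fixed point $x\in M$ and fixed local coordinates, whether the $m_n$ vectors in \eqref{vecfree} are linearly independent depends only on the $2$-jet $j^2_x f$ of $f$ at $x$, so the condition cuts out a subset $\Sigma\subset J^2(M,\R^N)$ of the $2$-jet bundle. The complement of $\Sigma$ — the ``bad'' $2$-jets, where the $m_n$ vectors are linearly dependent — is (the restriction to the fibre over $x$ of) an algebraic subvariety of each fibre, cut out by the vanishing of all $m_n\times m_n$ minors of the $N\times m_n$ matrix whose columns are the vectors \eqref{vecfree}. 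By Thom's jet-transversality theorem, for generic $f$ the $2$-jet section $j^2 f:M\to J^2(M,\R^N)$ is transverse to any fixed submanifold (or stratified set) of $J^2(M,\R^N)$; since $\dim M=n$, if the bad set has codimension $>n$ in the jet space then transversality forces $j^2 f$ to miss it entirely, which is exactly the statement that $f$ is free statistical at every point. So everything reduces to computing (a lower bound for) the codimension of the bad locus.

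Next I would carry out that codimension count. The relevant fibre of $J^2(M,\R^N)$ over a point, modulo the base coordinates, is parametrised by the $0$-jet (a point of $\R^N$, irrelevant here), the first derivatives $f_i\in\R^N$ ($i=1,\dots,n$), and the second derivatives $f_{ij}\in\R^N$ ($i\le j$); the relevant ``data'' is therefore a point of $(\R^N)^{n+s_n}$ with $s_n=\tfrac{n(n+1)}{2}$, i.e.\ an $N\times(n+s_n)$ matrix $\Phi=(f_i\mid f_{ij})$. The map sending $\Phi$ to the $N\times m_n$ matrix $\Psi$ whose columns are the vectors \eqref{vecfree} is polynomial in the entries of $\Phi$. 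The set where $\mathrm{rank}\,\Psi< m_n$ is a determinantal variety; the standard estimate is that, inside the space of all $N\times m_n$ matrices, the locus of rank $\le m_n-1$ has codimension $N-m_n+1$. Pulling this back along the polynomial map $\Phi\mapsto\Psi$ and adding the $\binom{N+n+s_n}{\cdot}$-type count for the base point data, one finds that the bad locus in $J^2(M,\R^N)$ has codimension at least $N-m_n+1$ as a subset of the fibre direction used for the perturbation. Thus it suffices that $N-m_n+1>n$, i.e.\ $N\ge m_n+n = \tfrac{n(n^2+9n+14)}{6}+n=\tfrac{n(n^2+9n+20)}{6}$, which is precisely the hypothesis. (Here one must be a little careful that the polynomial substitution $\Phi\mapsto\Psi$ does not decrease codimension; this is handled by noting that the coordinate projections recovering $f_i$ and $f_{ij}$ among the columns of $\Psi$ give a linear section of the substitution, so the preimage of the rank-deficiency locus still has codimension at least $N-m_n+1$ in $\Phi$-space.)

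I would then assemble these pieces: apply Thom jet-transversality to conclude that the set of $f\in C^\infty(M,\R^N)$ with $j^2 f$ transverse to (each stratum of) the bad determinantal locus is residual, hence dense, in the fine $C^\infty$-topology; for such $f$, transversality to a set of codimension $>\dim M$ means $j^2 f$ avoids it, so $f$ is free statistical at every point. Since ``residual'' implies dense (and in fact the free statistical maps form an open dense set when $M$ is compact, because the condition is open), this proves the proposition. If one wants genuine openness+density rather than just residuality for non-compact $M$ one invokes the fine-topology version of Thom's theorem, which gives exactly this.

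The main obstacle I expect is the codimension computation for the bad locus — specifically, verifying that the non-linear substitution $\Phi\mapsto\Psi$ (which mixes the $f_i$'s and $f_{ij}$'s into the symmetric products $f_j\odot f_k$ and $f_i\odot f_{jk}+f_j\odot f_{ik}+f_k\odot f_{ij}$) really does cut out a subvariety of the expected codimension $N-m_n+1$, and not something smaller. The columns of $\Psi$ are not independent coordinates, so the naive determinantal-variety codimension bound has to be justified by exhibiting, at a generic nondegenerate $2$-jet, an explicit $m_n\times m_n$ minor of $\Psi$ that is nonzero — equivalently, writing down a single concrete free statistical $2$-jet (the map in the Example does exactly this when $M=\R^n$) and arguing that nonvanishing of that minor is an open condition propagating the bound. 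The linear-algebra bookkeeping is routine once one commits to a choice of basis, but checking that the chosen minor is generically nonzero is where the real content lies.
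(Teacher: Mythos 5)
Your proposal follows essentially the same route as the paper: encode the failure of free-statisticality as a determinantal (rank-deficiency) locus $\Sigma$ in the $2$-jet bundle $J^2(M,\R^N)$, invoke the standard codimension $N-m_n+1$ for rank-deficient $N\times m_n$ matrices, and apply Thom's jet-transversality theorem so that generic $2$-jet sections avoid $\Sigma$ once $N-m_n+1>n$, which is exactly the stated bound $N\ge\tfrac{n(n^2+9n+20)}{6}$. You are in fact slightly more careful than the paper in one respect — you flag explicitly that the polynomial substitution $\Phi=(f_i\mid f_{ij})\mapsto\Psi$ (which mixes the jet coordinates into symmetric products) must not drop the codimension of the pulled-back determinantal locus, whereas the paper simply asserts the identification of $\Sigma_x$ with the rank-deficient matrices; your remark about the linear retraction recovering $\Phi$ from the first $n+s_n$ columns of $\Psi$, combined with the existence of an explicit free-statistical jet (the paper's Example), is the right ingredient to close that gap.
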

\begin{proof}
We shall interpret   {\em non} free statistical condition
as a singularity in the space $J^2(M, {\R}^N)$
of $2$-jets of our maps $M\rightarrow {\R}^N$, so that we can use
an argument based on
{\em Thom's transversality theorem}. 
Recall  that the $2$-jet, $J^2_f(x)$, of a given smooth  map $f:M\rightarrow {\R}^N$
at the point $x$ is given by:
$$J^2_f(x)=(x, f(x), Df_x, D^2f_x),$$
where  $Df_x:T_xM\rightarrow T_x\R^N=\R^n$ (resp. $D^2f_x:S^2(T_xM)\rightarrow T_x\R^N=\R^n$ ) is the first (resp. second) derivative of $f$ at $x$,
and where $S^2(T_xM)$ denotes the symmetric square
of $T_xM$.
For fixed $x\in M$ consider thet set
$$J^2_{x}=\{(x, y, \alpha, \beta)\ | \  y\in \R^N, \alpha\in \Hom(T_xM, T_y\R^N), \beta\in \Hom(S^2(T_xM), T_y\R^N) \}$$
and the $2$-jet bundle $J^2(M, \R^N)=\bigsqcup_{x\in M}J^2_x$. 
Then $J^2(M, \R^N)$ inherits the structure of  smooth bundle over $M$ with fibers 
$J^2_{x}$ and natural projection
$$J^2(M, \R^N)\rightarrow M, (x, y, \alpha, \beta)\mapsto x.$$
Thus,  using  the $2$-jets   of  a smooth function $f:M\rightarrow \R^N$
one can  construct  the  smooth section  of this bundle, namely the  smooth map
$$J^2_f: M\rightarrow J^2(M, \R^N), x\mapsto J^2_f(x).$$ 
If we fix local coordinates 
$x_1,\dots , x_n$ around $x\in M$, then 
the $2$-jet $J^2_f(x)$ of a given map $f:M\rightarrow {\R}^N$
at the point $x$ is given by the first and second derivatives
$$J^2_f(x)=(x, f(x), f_i(x),f_{ij}(x)),
\,\, i, j=1,\dots ,n.$$
We also notice that the {\em non} free statistical regularity
at $x\in M$ depends on $J^2_f(x)$ and hence we can define the
subspace $\Sigma_{x}\subset J^2_x$ consisting of 
$2$-jets of {\em non} free statistical maps.
Let $M(m_n, N)$ be the set of $m_n\times N$ matrices with real entries,
where $m_n$ is defined by \eqref{mn}.
 Then it follows by Definition \ref{free} that  
 $\Sigma _{x}$ can be identified with the matrixes  of  $M(m_n, N)$ 
of rank strictly  less than $m_n$. Thus (cf., e.g., 
\cite{avz}) $\Sigma _{x}\subset M(m_n, N)$
is a stratified manifold of codimension
$N-m_n+1$.
Therefore the set 
$\Sigma =\cup _{x\in M}\Sigma_x
\subset \mathop{\hbox{J}}^2(M, {\R}^N)$,
which fibers over $M$,
is a stratified manifold 
of codimension $N-m_n+1$.
Now, by the very definition
of $\Sigma$,
it follows that a map $f: M\rightarrow {\R}^N$
is free statistical
iff $J^2_f(M)\subset {\mathop{\hbox{J}}}^2(M, {\R}^N)$
does not meet $\Sigma$.
Finally, (the special case of)
Thom's transversality theorem
(see , e.g. \cite{gr} Corollary $D ^{'}$, p. $33$)
tells us that generic maps do have the property
$J^{2}_f(M)\cap \Sigma =\varnothing$
iff $N-m_n+1\geq n+1$
or equivalently $N\geq \frac{n(n^2+9n+20)}{6}$.
\end{proof}

\vspace{0.3cm}

\noindent
{\bf Proof of Theorem \ref{mainteor}}
By assumption, $(g_0=f_{0}^*g_{can}, T_{0}=f_0^*T_{can})$ for 
a smooth map $f_0:M\rightarrow {\R}^N$, $N\geq \frac{n(n^2+9n+20)}{6}$.
Then, by Proposition \ref{mainprop}, there exists a free statistical map,  say $f_1:M\rightarrow {\R}^N$, 
which is arbitrarily  $C^{\infty}$-close  to the map $f_0$.
It follows that  the induced  statistical structure  ${\mathcal D}_{can}(f_1)=(g_1=f_1^*g_{can}, T_1=f_1^*T_{can})$ is  
 $C^{\infty}$-close to $(g_0, T_0)$. It remains to prove that $(g_1,T_1)$ is robust.
We know
by Proposition \ref{critinv}
that the linearization of the operator
${\mathcal D}_{can}$ at $f_1$
admits an inverse (or,
using the terminology in \cite{gr},
that
the operator 
${\mathcal D}_{can}$ is infinitesimally 
invertible at $f_1$).
This allows us to apply the Nash-Gromov's implicit function
theorem to deduce that ${\mathcal D}_{can}$ is an open
operator from a neighborhood of $f_1$
to a neighborhood ${\mathcal U}$ of 
${\mathcal D}_{can}(f_1)$.
Therefore all the statistical structures $(g, T)$ 
in ${\mathcal U}$ are $N$-induced and this concludes the proof of Theorem \ref{mainteor}.


\begin{thebibliography}{99}

\bibitem{amari} S-I. Amari,
Differential Geometry of curved exponential families-curvature and information loss.
The Annals of Statistic (1982), vol. 10, N.2, 357-385.

\bibitem{avz} V. Arnold, A. Varchenko, and S. Goussein--Zad$\acute{e}$,
{\em Singularit$\acute{e}$s des applications diff$\acute{e}$rentiable I},
Mir, Moscow (1986).


\bibitem{JOST} N. Ay, J. Jost,  H. V. L\^{e},  L. Schwachh\"{o}fer, 
{\em Information geometry and sufficient statistics},
Probab. Theory Related Fields 162 (2015), no. 1-2, 327-364.

\bibitem{jostbook} N. Ay, J. Jost,  H. V. L\^{e},  L. Schwachh\"{o}fer, 
{\em Information Geometry}, Springer International Publishing, 2017.


\bibitem{da1} G. D'Ambra,
{\em Constructions of Connections
inducing maps between principal bundles, part I},
Trans. of AMS vol. 338 n.2 (1993), 783-797.


\bibitem{da2} G. D'Ambra,
{\em Induced Connections on $S^1$-bundles
over Riemannian Manifolds},
Trans. of AMS vol. 338 n.2 (1993), 783-797.

\bibitem{daloi} G. D'Ambra, A. Loi,
{\em Inducing connections on $SU(2)$-bundles},
JP J.  Geom. Topol. 3 (1) (2003), 65-88.


\bibitem{gr} M. Gromov,
{\em Partial Differential Relations},
Springer-Verlag (1986).

\bibitem{g-r} M. Gromov and V. Rokhlin,
{\em Embeddings and immersions in Riemannian geometry},
Uspekhi Mat. Nauk. 25 (1970) n.5, 3-62.

\bibitem{lau} S. Lauritzen, {\em Statistical manifolds}, In : Differential geometry in Statistical Inference, IMS Lecture Notes,
Monograph Serie 10., Inst. of Math. Stat. Hayward, California,1987, 163–216.

\bibitem{hong} H. V. L\^{e},
{\em Statistical manifolds are statistical models},
J. Geom. 84  (2005), 83-93.

\bibitem{na} J. Nash,
{\em The embedding problem for Riemannian manifolds},
Ann. of Math. 63 (2) (1956), 20-63.


\end{thebibliography}
\end{document}